\begin{document}

\setlength{\parindent}{5mm}
\renewcommand{\leq}{\leqslant}
\renewcommand{\geq}{\geqslant}
\newcommand{\N}{\mathbb{N}}

\newcommand{\CZ}{\mu_\mathrm{CZ}}
\newcommand{\Z}{\mathbb{Z}}
\newcommand{\R}{\mathbb{R}}
\newcommand{\C}{\mathbb{C}}
\newcommand{\F}{\mathbb{F}}
\newcommand{\g}{\mathfrak{g}}
\newcommand{\h}{\mathfrak{h}}
\newcommand{\K}{\mathbb{K}}
\newcommand{\RN}{\mathbb{R}^{2n}}
\newcommand{\ci}{c^{\infty}}
\newcommand{\derive}[2]{\frac{\partial{#1}}{\partial{#2}}}
\renewcommand{\H}{\mathbb{H}}
\newcommand{\eps}{\varepsilon}
\renewcommand{\r} {\textbf{r}}
\renewcommand{\L}{\mathcal{L}}

\newcommand{\B}{B^{2n}(\frac{1}{\sqrt{\pi}})}

\theoremstyle{plain}
\newtheorem{theo}{Theorem}
\newtheorem{prop}[theo]{Proposition}
\newtheorem{lemma}[theo]{Lemma}
\newtheorem{definition}[theo]{Definition}
\newtheorem*{notation*}{Notation}
\newtheorem*{notations*}{Notations}
\newtheorem{corol}[theo]{Corollary}
\newtheorem{conj}[theo]{Conjecture}
\newtheorem{question}[theo]{Question}
\newtheorem*{question*}{Question}
\newtheorem*{remark*}{Remark}

\newenvironment{demo}[1][]{\addvspace{8mm} \emph{Proof #1.
    ---~~}}{~~~$\Box$\bigskip}

\newlength{\espaceavantspecialthm}
\newlength{\espaceapresspecialthm}
\setlength{\espaceavantspecialthm}{\topsep} \setlength{\espaceapresspecialthm}{\topsep}

\newenvironment{example}[1][]{\refstepcounter{theo} 
\vskip \espaceavantspecialthm \noindent \textsc{Example~\thetheo
#1.} }%
{\vskip \espaceapresspecialthm}

\newenvironment{remark}[1][]{\refstepcounter{theo} 
\vskip \espaceavantspecialthm \noindent \textsc{Remark~\thetheo
#1.} }%
{\vskip \espaceapresspecialthm}

\def\Homeo{\mathrm{Homeo}}
\def\Hameo{\mathrm{Hameo}}
\def\Diffeo{\mathrm{Diffeo}}
\def\Symp{\mathrm{Symp}}
\def\Id{\mathrm{Id}}
\newcommand{\norm}[1]{||#1||}
\def\Ham{\mathrm{Ham}}
\def\Hamtilde{\widetilde{\mathrm{Ham}}}
\def\Crit{\mathrm{Crit}}
\def\Spec{\mathrm{Spec}}
\def\osc{\mathrm{osc}}
\def\Cal{\mathrm{Cal}}

\def\n {\nu_c}

\title[Poisson Bracket invariants]{Spectral killers and Poisson bracket invariants} 
\author{Sobhan Seyfaddini}
\date{\today}

\address{Sobhan Seyfaddini, D\'epartement de Math\'ematiques et Applications de l'\'Ecole Normale Sup\'erieure, 45 rue d'Ulm, F 75230 Paris cedex 05}
\email{sobhan.seyfaddini@ens.fr}


\begin{abstract} 
We find optimal upper bounds for spectral invariants of a Hamiltonian whose support is contained in a union of mutually disjoint displaceable balls.  This gives a partial answer to a question posed by Leonid Polterovich in connection with his recent work on Poisson bracket invariants of coverings.
\end{abstract}

\maketitle

\section{Introduction}
   The theory of spectral invariants associates to each Hamiltonian $H$, on a closed and connected symplectic manifold $(M, \omega)$,  a collection of real numbers
 $$\{c(a, H) \in \mathbb{R}: a \in QH_*(M) \setminus \{0\} \},$$ where $QH_*(M)$ denotes the quantum homology of $M$.  These numbers are referred to as the spectral invariants of $H$.
They were introduced by Oh, Schwarz and Viterbo \cite{Vit92, Sc00, Oh05a}.   Roughly speaking,  $c(a, H)$ is the action level at which $a \in QH_*(M)\setminus \{0\}$ appears in the Floer homology of $H$.  These invariants have been studied extensively and have had many interesting applications in symplectic geometry; see \cite{EP03, Gin, Oh05a, Sc00}. 

In this article we will be only concerned with the spectral invariant associated to the neutral element $[M] \in QH_*(M)$.  Hence, we will abbreviate $c(H):= c([M], H)$.  The main objective of this paper is to find optimal upper bounds for $c(H)$ when the support of $H$ is contained in a disjoint union $U_1 \sqcup \cdots \sqcup U_N$, where each $U_i$ is a displaceable open ball.   A priori, the expected upper bound, given by the triangle inequality (see Proposition  \ref{properties_spec}), for $c(H)$ is  $\sum c(H_i)$, where $H_i := H|_{U_i}$.   From the viewpoint of Morse-Floer theory for the action functional, there exists little communication between Hamiltonians which are supported in small and pairwise disjoint balls. This lack of Floer theoretic interaction among the $H_i$'s is manifested in Theorem \ref{theo_bound}, which provides the same upper bound for $c(H)$ as $c(H_i)$. Thus, the bound in Theorem \ref{theo_bound} is roughly $N$ times  better than the expected upper bound.

To prove Theorem \ref{theo_bound}, we introduce a new technique for bounding $c(H)$  which involves computing spectral invariants for a special class of functions. Because of a fascinating property of this class of functions, we refer to them as ``spectral killers.''  See Theorem \ref{theo_killer}.

The motivation for searching for upper bounds as described above arises from the recent work of Polterovich \cite{Po12b} on Poisson bracket invariants of coverings.  Indeed, as demonstrated in \cite{Po12b}, such upper bounds lead to optimal lower bounds for Poisson bracket invariants; see Theorem \ref{theo_1}. 

\subsection{Upper bounds for spectral invariants and spectral killers}
Recall that a time dependent Hamiltonian $H \in C^{\infty}([0,1] \times M)$ gives rise to a Hamiltonian flow  $\phi^t_H$.  The group of Hamiltonian diffeomorphisms $Ham(M)$ is the collection of time-1 maps of such flows.  The Hofer norm of $\psi \in Ham(M)$ is defined by the expression $\Vert \psi \Vert_{Hofer} = \inf \{ \Vert H \Vert_{(1,\infty)}: \psi = \phi^1_H \},$
	where $$\displaystyle \Vert H \Vert_{(1, \infty)} = \int_0^1{ (\max_M H(t, \cdot) - \min_M H(t, \cdot)) \,dt}.$$

A subset $U \subset M$ is said to be displaceable if there exists a Hamiltonian diffeomorphism $\phi \in Ham(M)$ such that $\phi(U) \cap \bar{U}  = \emptyset$, where $\bar{U}$ denotes the closure of $U$.  Define the displacement energy of $U$ to be $$E(U) = \inf \{\Vert \phi \Vert_{Hofer}: \phi(U) \cap \bar{U} = \emptyset \}.$$
One version of the famous energy-capacity inequality (see \cite{Vit92, usher})  states that if $supp(H)$, the support of $H$, is displaceable, then
$$|c(H)| \leq E(supp(H)).$$

Suppose that $\displaystyle supp(H) \subset U_1 \sqcup \cdots \sqcup U_k $ where the sets $U_i$ are mutually disjoint and each $U_i$ is displaceable; note that $supp(H)$ is not necessarily displaceable. Using the triangle inequality \ref{properties_spec} and the above energy-capacity inequality one can easily show that $|c(H)| \leq \sum_{i=1}^k{E(U_i)}.$
Polterovich, through his work on Poisson bracket invariants \cite{Po12b}, particularly in connection with Question \ref{Leonid_que2}, was lead to ask: 

\begin{question} \label{Leonid_que1}
 Is it true that $|c(H)| \leq \max\{E(U_i)\}$?
\end{question}

In this article, we address the above question on monotone manifolds: We call $(M, \omega)$ monotone if $\exists \, \lambda \neq 0$ such that $\omega|_{\pi_2} = \lambda c_1|_{\pi_2}$, where $c_1$ denotes the first Chern class of $M$.  Note that we allow the monotonicity constant $\lambda$ to be negative.  Monotone manifolds constitute a large and important class of symplectic manifolds; examples include projective spaces.  

Our main theorem gives an affirmative answer to Polterovich's question under certain ``regularity'' assumptions on the sets $U_i$. Below, we assume that $(M, \omega)$ is monotone with monotonicity constant $\lambda$.

\begin{theo}\label{theo_bound}
Let $U_1, \cdots , U_k$ denote a collection of mutually disjoint open subsets of $M$.  Assume that each $U_i$ is symplectomorphic to the Euclidean ball of radius $r_i$ and that the $U_i$'s are displaceable with displacement energy $E(U_i) < \frac{|\lambda|}{2}$.  Then, for any Hamiltonian $H$ whose support is contained in $U_1 \sqcup \cdots \sqcup U_k,$ 
$$0 \leq c(H) \leq \pi r^2,$$
where $r = \max\{r_1, \cdots, r_k\}$.
\end{theo}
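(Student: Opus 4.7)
The plan is to reduce the multi-ball estimate to the single-ball case via the spectral killer machinery of Theorem~\ref{theo_killer}. Decompose $H = H_1 + \cdots + H_k$ with $H_i := H|_{U_i}$; since the $U_i$'s are mutually disjoint, the $H_i$'s pairwise Poisson-commute and the time-$1$ map of $H$ factors as the commuting product $\phi^1_{H_1}\cdots\phi^1_{H_k}$. Relabel so that $r_j = r$ is the largest radius, and split $H = H_j + G$ with $G := \sum_{i\neq j} H_i$ supported in the displaceable set $V := \bigsqcup_{i\neq j}U_i$.

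First I would establish the single-ball estimate: for any Hamiltonian $K$ supported in a symplectically embedded Euclidean ball of radius $\rho$ satisfying the displacement bound from the hypothesis, one has $0 \le c(K) \le \pi \rho^2$. The upper bound is a standard Gromov-width computation: threading a PSS representative of $[M]$ through a constant orbit inside the ball and bounding its action by the area of a capping disk gives $c(K) \le \pi \rho^2$. The lower bound follows from spectrality combined with the window $|c(K)| \le E(\mathrm{supp}\,K) < |\lambda|/2$ from the energy-capacity inequality: the constants outside the ball give $0$ in the action spectrum, and the discrete monotone period lattice of width $|\lambda|$ cannot contribute within this window, so $c(K) \ge 0$.

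Given the single-ball case, the crux is to invoke Theorem~\ref{theo_killer} to absorb the auxiliary Hamiltonian $G$. The principle is to produce an auxiliary function $F$, supported in a neighborhood of $V$ disjoint from $U_j$, that ``kills'' the spectral contribution of $G$; combining its defining property with the triangle inequality and Poisson-commutation of the disjointly supported pieces $H_j$ and $G$ should collapse $c(H) = c(H_j + G)$ down to $c(H_j)$, which is already bounded by $\pi r^2$. Morally, $F$ cancels each $H_i$, $i \neq j$, at the Floer-theoretic level without introducing new large-action generators near $U_j$ or elsewhere.

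The principal obstacle is producing the spectral killer $F$, i.e. the content of Theorem~\ref{theo_killer} itself. The hypothesis $E(U_i) < |\lambda|/2$ is essential here: in the monotone setting the action spectrum is discrete with period $|\lambda|$, and the energy-capacity bound $|c(H_i)| \le E(U_i) < |\lambda|/2$ confines each spectral contribution strictly inside one fundamental domain of the period lattice. This non-wrapping property is what allows the continuation argument underlying the killer to identify the correct Floer generators without jumping to a neighboring period class, and I expect managing this monotonicity window --- and arranging that $F$ does not itself spoil the estimate --- to be where the bulk of the technical work lies.
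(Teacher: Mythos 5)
Your plan to use Theorem~\ref{theo_killer} is on the right track, but the asymmetric split $H = H_j + G$ --- killing $G$ with an $F$ supported away from $U_j$ and leaving $H_j$ untouched --- does not give the claimed bound. Even granting $c(G + F) = 0$, the triangle inequality yields $c(H_j + G + F) \leq c(H_j) + c(G+F) \leq \pi r^2$, but to remove $F$ you must use the Continuity property and pay $\Vert F \Vert_\infty$; each killer for a ball of radius $r_i$ has sup-norm $\pi r_i^2$, so $\Vert F \Vert_\infty = \max_{i\neq j}\pi r_i^2$, which can equal $\pi r^2$ when there are ties. The result is $c(H) \leq 2\pi r^2$, not $\pi r^2$. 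Your alternative hope --- that $c(H_j + G)$ literally collapses to $c(H_j)$ --- is the maximum formula conjectured in Remark~\ref{rem:conj_max_formula}, which the paper explicitly does not prove. The paper instead treats every ball symmetrically: place a killer $K_i$ inside \emph{each} $U_i$, including $U_j$. Disjointness of supports gives $(H_1 + K_1)\#\cdots\#(H_k + K_k) = H + \sum K_i$, so the triangle inequality plus Lemma~\ref{c_pos} force $c(H + \sum K_i) = 0$, and, crucially, $\Vert \sum K_i \Vert_\infty = \max_i \pi r_i^2 = \pi r^2$ (a maximum, not a sum, again by disjointness). A single Continuity estimate then delivers $c(H) \leq 0 + \pi r^2$. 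The $\max$ in the statement originates in this sup-norm computation, not in singling out the largest ball.

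Separately, the single-ball bound $c(K) \leq \pi\rho^2$ for an arbitrary (non-radial, possibly time-dependent) $K$ supported in a ball of radius $\rho$ is not a ``standard Gromov-width computation''; the PSS/capping-disk heuristic you invoke does not control the actions of non-constant orbits of a non-radial Hamiltonian. That inequality is precisely the $k=1$ case of Theorem~\ref{theo_bound} and itself requires Theorem~\ref{theo_killer}. The paper never uses a single-ball estimate as a prior ingredient --- it drops out of the same killer argument. Likewise, the lower bound $c(H) \geq 0$ does not follow from spectrality and the period lattice alone as you sketch (note in particular that $\mathrm{supp}(H) = \bigsqcup U_i$ need not be displaceable, so the energy--capacity inequality does not apply to $H$ directly); the paper obtains it from Lemma~\ref{c_pos}, whose proof is an iteration argument using only the triangle inequality and the per-ball energy bound.
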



\begin{remark}
It follows from Theorem 1.1 of \cite{usher} that the displacement energy of a symplectic ball of radius $r$ is at least as large as $\pi r^2.$ Therefore, it follows that $|c(H)| \leq \max\{E(U_i)\}.$ 
\end{remark}

\begin{remark}\label{rem:upperbound_optimal}
In order to show that the upper bound in the above theorem is optimal, we will now briefly describe how one may construct $H$ such that $c(H) \approx \pi r^2$.  Identify $U_1$ with $B_r,$ the Euclidean ball of radius $r$.  Take $H$ to be a radial Hamiltonian supported in $U_1$ which has no non-constant periodic orbits of period at most $1$.  Proposition 4.1 of \cite{usher} implies that $c(H) = max(H)$. (The conventions in \cite{usher} are different than ours and hence one must adjust the statement in \cite{usher}.)  Now, a simple exercise in calculus would yield that $H$ can be picked such that $max(H)$ is arbitrarily close to (but always smaller than) $\pi r^2$.
\end{remark}

\begin{remark}
It is absolutely crucial to assume the $U_i$'s are displaceable.  Indeed, any disc in $S^2$ with area larger than half the total area supports Hamiltonians  with arbitrarily large spectral invariants.\end{remark}

\begin{remark}\label{rem:conj_max_formula}
The sets $U_i$ in Theorem \ref{theo_bound} and Question \ref{Leonid_que1} are non-overlapping and symplectically small, in the sense that they are displaceable.  As mentioned earlier, this leads to a lack of Floer theoretic interaction among the Hamiltonians $H_i$.  Hence, it seems reasonable to  conjecture that the following maximum formula holds: $$c(H) = \max\{c(H_i)\}.$$  Of course, this would generalize Theorem \ref{theo_bound} and would give an affirmative answer to Question \ref{Leonid_que1}.  Furthermore, Theorem \ref{theo_1} which is a corollary of Theorem \ref{theo_bound} would follow as well.
\end{remark}

\subsection*{Spectral Killers}
To prove Theorem \ref{theo_bound}, we compute the spectral invariant of a special class of functions; we have come to call these functions ``spectral killers.''  Let $U \subset M$ denote an open subset which is symplectomorphic to a ball.  We identify $U$ with $B_r$, the open Euclidean ball of radius $r$.  

\medskip

\noindent Pick $\epsilon \in  (0, \frac{r}{4}).$  Define $K_{\epsilon} : M \rightarrow \mathbb{R}$ such that:

\begin{figure}
\centering
\includegraphics{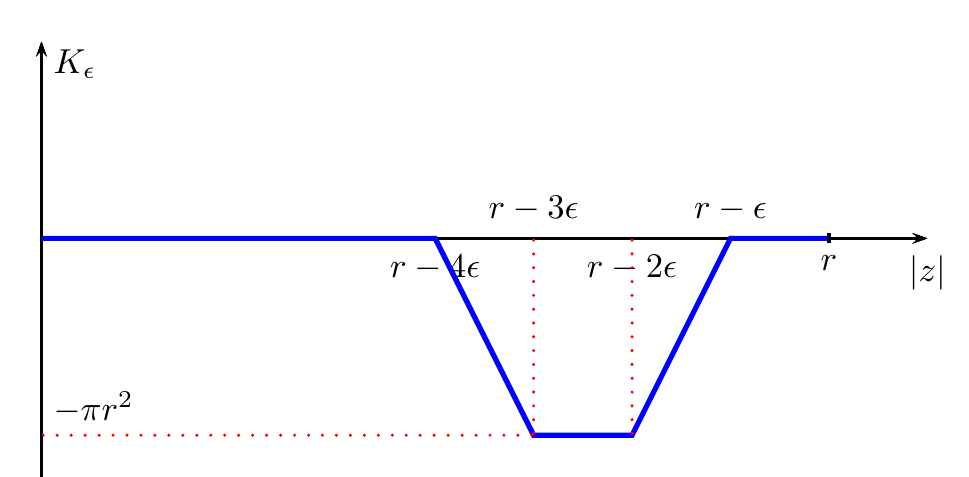}
\caption{Graph of the spectral killer $K_{\epsilon}$.}
\label{fig:killer_graph}
\end{figure}

\begin{enumerate}
\item $supp(K_{\epsilon}) \subset \{z : r - 4 \epsilon \leq |z| \leq r - \epsilon \},$ 
\item $K_{\epsilon}$ is radial, i.e. $K_{\epsilon}(z) = k_\epsilon(|z|)$ is a function of $|z|$,
\item $k_{\epsilon}$ decreases linearly on $[r -4 \epsilon, r - 3 \epsilon],$
\item $K_{\epsilon}(z) = -\pi r^2$ on $\{z : r - 3 \epsilon \leq|z| \leq r - 2 \epsilon \},$
\item $k_{\epsilon}$ increases linearly on $[r -2 \epsilon, r - \epsilon].$
\end{enumerate}

\noindent See Figure \ref{fig:killer_graph} for a graph of $K_{\epsilon}$.  Observe that $K_{\epsilon}$ approximates  the indicator function of the shell $\{z : r - 3 \epsilon \leq|z| \leq r - 2 \epsilon \}$.
Our next result states that adding $K_{\epsilon}$ to a Hamiltonian $H$ supported in $U$ kills the spectral invariants of $H$; hence we call $K_{\epsilon}$  a ``spectral killer'' for the domain $U$.  (Recall that spectral invariants are defined for continuous functions; see Section \ref{sec:sepctral_invariants}.)  

In the following theorem,  $(M, \omega)$ is assumed to be monotone, with monotonicity constant $\lambda$, and $U$, $K_{\epsilon}$ are as described above.

\begin{theo}\label{theo_killer}
  Suppose that $U$ is displaceable with displacement energy $E(U) < \frac{|\lambda|}{2}.$ Then, $c(H+K_{\epsilon}) = 0$  for any Hamiltonian $H$ supported in  $B_{r - 4 \epsilon}$.
\end{theo}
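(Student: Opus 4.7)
The plan is to combine the displaceability of $U$ with the monotone structure of $M$. Since $\mathrm{supp}(H) \subset B_{r-4\epsilon}$ and $\mathrm{supp}(K_\epsilon) \subset \{r-4\epsilon \leq |z| \leq r-\epsilon\}$ are disjoint subsets of $U$, the sum $F := H+K_\epsilon$ is supported in $U$. The energy-capacity inequality quoted in the introduction then yields $|c(F)| \leq E(U) < |\lambda|/2$, confining $c(F)$ to the open interval $(-|\lambda|/2, |\lambda|/2)$. This interval contains exactly one point of $\lambda \mathbb{Z}$, namely $0$, so the theorem reduces to showing $c(F) \in \lambda \mathbb{Z}$.

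My approach to this reduction would be Floer-theoretic. After a small non-degenerate perturbation, the 1-periodic orbits of $F$ split into four groups: (i) constant orbits on $M \setminus U$, carrying action $0$; (ii) perturbed constants on the plateau of $K_\epsilon$, with action close to $\pi r^2$; (iii) rotational orbits on the two ramps of $K_\epsilon$, whose actions are computable from the linear slopes $\pm \pi r^2/\epsilon$; and (iv) orbits of $H$ inside $B_{r-4\epsilon}$. The target is to construct a Floer cycle representing the fundamental class $[M]$ whose surviving generators belong only to group (i); this would give $c(F) \leq 0$. The matching inequality $c(F) \geq 0$ would follow by Poincar\'e duality applied to $-F$ and the point class $[\mathrm{pt}]$.

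The main obstacle is the chain-level cancellation of the contributions from groups (ii)--(iv). Since these orbits all sit on the displaceable set $U$, the plan is to cancel them via Floer continuation through a Hamiltonian isotopy that displaces $U$. The precise inequality $E(U) < |\lambda|/2$ is crucial here: it ensures that all relevant continuation trajectories stay within a single sheet of the $\lambda \mathbb{Z}$-valued action filtration, so that no quantum correction from sphere bubbling disrupts the identification of $[M]$ with the constants outside $U$. The well depth of $K_\epsilon$, set to $-\pi r^2$, is calibrated so that the plateau action $\pi r^2$ respects this threshold, since $\pi r^2 \leq E(U) < |\lambda|/2$ by Theorem 1.1 of \cite{usher}. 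This calibration is the geometric reason why adding $K_\epsilon$ to any $H$ supported in $B_{r-4\epsilon}$ pushes the spectral invariant into the window where monotonicity leaves $0$ as the only possibility, justifying the name ``spectral killer''.
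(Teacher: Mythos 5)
Your opening observations are on target: $H+K_\epsilon$ is supported in $U$, the energy--capacity inequality gives $|c(H+K_\epsilon)| \leq E(U) < |\lambda|/2$, and the lattice $\lambda\mathbb{Z}$ meets that window only at $0$. But the reduction ``show $c(F)\in\lambda\mathbb{Z}$'' is not actually a simplification: the action spectrum of $F=H+K_\epsilon$ lives in several cosets of $\lambda\mathbb{Z}$ (notably $-\pi r^2 + \lambda\mathbb{Z}$, from the plateau, and whatever values $H$ produces), and $-\pi r^2$ itself lies inside $(-|\lambda|/2,|\lambda|/2)$ by \eqref{E_range}. So the reduction is exactly as hard as the original problem, and your proposal never closes it.

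The decisive missing idea is the paper's first move: do not analyze $H+K_\epsilon$ directly, but dominate it by a \emph{radial} Hamiltonian $F$ with $F\equiv m > \max(H+K_\epsilon)$ on $B_{r-4\epsilon}$, $F=-\pi r^2$ on the plateau, and linear ramps. Monotonicity then gives $c(H+K_\epsilon)\leq c(F)$, and Lemma~\ref{c_pos} gives $c(H+K_\epsilon)\geq 0$, so it suffices to show $c(F)=0$. This eliminates your group (iv) entirely: since $H$ is an arbitrary Hamiltonian on $B_{r-4\epsilon}$, its periodic orbits and actions are uncontrollable, and no continuation/cancellation scheme can handle them uniformly. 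Your proposed mechanism --- ``cancel them via Floer continuation through a Hamiltonian isotopy that displaces $U$'' --- is essentially the proof of the energy--capacity inequality, which you already used to get $|c(F)|\leq E(U)$; it does not single out $0$. The paper's actual mechanism is different: spectrality forces the carrier of $c(F)$ to be a capped orbit of Conley--Zehnder index $n$, and then one computes (using Oancea's index formulas for radial Hamiltonians together with the recapping rule \eqref{CZ-index identity}) the actions of all index-$n$ capped orbits of the perturbed $F$ and checks that each is either $\approx 0$ or lies outside $[0,E]$. In particular the well-depth $-\pi r^2$ is needed so that the index-$n$ orbit near $|z|\approx r-3\epsilon$ with $l=-1$ has action $\pi(r-3\epsilon)^2-\pi r^2<0$ (see Remark~\ref{rem:action_carrying_orbit}); this is concrete index/action bookkeeping, not the continuation calibration you describe. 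Without the dominating radial $F$ and the CZ-index/action computation, the argument does not go through.
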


We will see in Section \ref{sec:proof_killer} that Theorem \ref{theo_bound} follows, without much difficulty, from the above theorem.  The important role that spectral killers play in estimating spectral invariants suggests that it would be worthwhile to construct spectral killers for domains more general than symplectic balls. 

\subsection{Lower bounds for Poisson bracket invariants}
   A partition of unity on $M$ is a collection of non-negative smooth functions $\vec{f} = \{f_1, \cdots, f_L\}$ such that $\sum{f_i} =1$. When $M$ is symplectic, the space of smooth functions $C^{\infty}(M)$ can be equipped with the Poisson bracket $\{\cdot, \cdot \} : C^{\infty}(M) \times C^{\infty}(M) \rightarrow C^{\infty}(M)$ which in local Darboux coordinates $(x_i,y_i)$ is given by the expression $ \displaystyle \{f,g\} = \sum_{i} \frac{\partial f} {\partial x_i} \frac{\partial g} {\partial y_i} - \frac{\partial f} {\partial y_i} \frac{\partial g} {\partial x_i} .$
	
	Following \cite{Po12a}, we define the magnitude of Poisson non-commutativity of $\vec{f}$
	$$\n(\vec{f}) : = \max_{x_i, y_i \in [-1,1]} \Vert \{ \sum{x_i f_i}, \sum{y_i f_i} \} \Vert,$$

\noindent where  $\Vert \cdot \Vert$ stands for the sup norm.

	Let $\mathcal{U} = \{U_1, \cdots , U_N \}$ denote a finite open cover of $M$.  The partition of unity $\{f_i\}$ is said to be subordinate to $\mathcal{U}$ if the support of each $f_i$ is contained in one of the  $U_j$'s.  In \cite{Po12a}, Polterovich defines $pb(\mathcal{U})$ the Poisson bracket invariant of  $\mathcal{U}$ by
	$$pb(\mathcal{U}) = \inf \n (\vec{f}),$$
\noindent where the infimum is taken over all partitions of unity subordinate to $\mathcal{U}$.  This invariant provides an obstruction to existence of a Poisson commuting partition of unity subordinate to $\mathcal{U}$.  

The main application of Theorem \ref{theo_bound} is in providing lower bounds for the invariant $pb$.   The search for lower bounds for certain invariants related to $pb$ was initiated in \cite{EPZ07}. Lower bounds for $pb$ and their connections to quantum mechanics are studied extensively in \cite{Po12a, Po12b}.   As demonstrated in \cite{Po12a, Po12b}, on certain quantizable symplectic manifolds,  the quantity $\nu_c$ admits a quantum mechanical counter part $\kappa_q$.  Lower bounds for $pb(\mathcal{U})$ provide lower bounds for  $\kappa_q$ and the \emph{inherent noise} of  \emph{quantum registration procedures} which arise.

If a cover $\mathcal{U}$ consists of non-displaceable sets, then $pb(\mathcal{U})$ could vanish; see any of \cite{EPZ07, Po12a, Po12b} for such examples. Suppose that $\mathcal{U}  = \{U_1, \cdots, U_N\}$ consists of displaceable open sets and let $E(\mathcal{U}) = \max\{E(U_i)\}$. In \cite{Po12b}, Question 8.1, Polterovich asks:

\begin{question} \label{Leonid_que2}
Is it true that $pb(\mathcal{U}) \geq \frac{C}{E(\mathcal{U})}$, where the constant $C$ depends only on $(M, \omega)$?
\end{question}

This question remains open.  In all  currently known lower bounds, the constant $C$ depends on $\mathcal{U}.$ In \cite{Po12b}, the above question is studied for covers which satisfy certain regularity conditions.  One of these assumptions is $d$-regularity:  

\medskip

\noindent \textbf{A1.} A covering $\mathcal{U}$ is said to be $d$-regular if the closure of every $U_j$ intersects at most the closure of $d$ other sets from the cover. 

\medskip

 The following result is the main application of Theorem \ref{theo_bound}.  The fact that a result of this nature would follow from Theorem \ref{theo_bound} was explained to us by Leonid Polterovich. 

\begin{theo} \label{theo_1} Suppose that $M$ is monotone with monotonicity constant $\lambda$, $\mathcal{U}$ is $d$-regular, and each $U_i$ is symplectomorphic to the Euclidean ball of radius $r_i$. If $E(\mathcal{U}) < \frac{|\lambda|}{2}$, then
$$pb(\mathcal{U}) \geq \frac{C(d)}{\pi r^2},$$
where $r = \max r_i$ and $C(d) = \frac{1}{2 d^2}.$
\end{theo}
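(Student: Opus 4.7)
The plan is to reduce Theorem~\ref{theo_1} to Theorem~\ref{theo_bound} via a coloring argument. Let $\vec f = (f_1, \ldots, f_N)$ be an arbitrary partition of unity subordinate to $\mathcal{U}$; the goal is to exhibit $x, y \in [-1,1]^N$ with $\|\{\sum x_i f_i, \sum y_i f_i\}\| \geq \frac{1}{2d^2 \pi r^2}$.

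First I would form the intersection graph of $\mathcal{U}$, whose vertices are the $U_i$'s with edges between pairs whose closures meet. By the $d$-regularity assumption \textbf{A1}, this graph has maximum degree at most $d$, so a greedy proper vertex coloring uses at most $d+1$ colors. Grouping indices into color classes $I_1, \ldots, I_m$ with $m \leq d+1$, and setting $F_k = \sum_{i \in I_k} f_i$, I obtain Hamiltonians whose supports lie in a disjoint union of open balls of radius $\leq r$ and displacement energy $<|\lambda|/2$, i.e.\ precisely the configurations to which Theorem~\ref{theo_bound} applies. Moreover $\sum_k F_k = \sum_i f_i = 1$. Applying Theorem~\ref{theo_bound} to $sF_k$ for arbitrary $s\in\mathbb{R}$ (and to linear combinations with signs absorbed into the $F_k$'s) yields $|c(sF_k)| \leq |s|\,\pi r^2$ for every $k$.

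The heart of the argument is a quantitative conversion of these spectral bounds into a Poisson bracket lower bound. This step relies on the machinery developed by Polterovich in \cite{Po12b} (in the spirit of Buhovsky--Entov--Polterovich near-commutativity estimates): if the pairwise Poisson brackets $\|\{F_k, F_l\}\|$ are all bounded by $\epsilon$, then the spectral invariant of $\sum_k F_k$ is approximately additive, with an error depending on $m$, $\epsilon$, and $\max_k c(F_k)$. Combining $1 = c(\mathbf{1}) = c(\sum_k F_k)$ on one side with the upper bound $\sum_k c(F_k) \leq m\,\pi r^2$ coming from Theorem~\ref{theo_bound} on the other, and balancing against the Polterovich error term, forces a lower bound of the form $\epsilon \geq \frac{1}{2 d^2 \pi r^2}$. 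Since each $F_k$ arises as $\sum_i x_i f_i$ with $x \in \{0,1\}^N \subset [-1,1]^N$, the inequality $\max_{k,l}\|\{F_k, F_l\}\| \leq \nu_c(\vec f)$ holds, and the conclusion follows by taking the infimum over $\vec f$.

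The main obstacle is the last step: the quantitative near-additivity estimate for spectral invariants under small Poisson brackets. The coloring reduction and the direct invocation of Theorem~\ref{theo_bound} are essentially bookkeeping; by contrast, extracting the sharp constant $C(d) = 1/(2d^2)$ requires careful calibration of Polterovich's spectral-to-Poisson inequality, including tracking how the combinatorial factor of $m \leq d+1$ enters and how the best choice of switching between the $F_k$-flows converts a small uniform bound on $\|\{F_k,F_l\}\|$ into a controlled perturbation of the composition $\phi_{F_1}^1 \cdots \phi_{F_m}^1$ relative to the identity.
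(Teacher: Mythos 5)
Your coloring reduction is exactly right and matches the paper: $d$-regularity gives (at most) $d+1$ classes of mutually disjoint sets, and the resulting $F_k$'s are supported in disjoint unions of displaceable symplectic balls, which is precisely the setting of Theorem~\ref{theo_bound}. The identification $\|\{G_k, F_{k+1}\}\| \leq \nu_c(\vec f)$ via $\{0,1\}$-coefficients is also the right observation. However, there are two genuine problems.

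First, the bound you extract from Theorem~\ref{theo_bound} is written incorrectly, and the error is fatal to the argument. Applying the theorem directly to $sF_k$ (whose support is still contained in the union of balls) gives $0 \leq c(sF_k) \leq \pi r^2$ \emph{uniformly in $s$}. Your version, $|c(sF_k)| \leq |s|\,\pi r^2$, is both weaker and not what the theorem says. The uniformity in $s$ is the entire point: it is what forces $\zeta(F_k) = 0$ (where $\zeta(F) = \lim_{k\to\infty} c(kF)/k$ is the Entov--Polterovich partial symplectic quasi-state), and it is what bounds the quantity $S(G_k, F_{k+1})$ by $\pi r^2$ independently of the number of iterations. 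With a bound that scales linearly in $s$, you only get $\zeta(F_k) \leq \pi r^2$, and the telescoping estimate below does not close.

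Second, the quantitative conversion from spectral bounds to a Poisson bracket lower bound is left as a black box; you name "Polterovich's machinery" and gesture at near-additivity and flow compositions, but do not identify the tool or run the estimate. The actual ingredient is the Poisson bracket inequality (Proposition 5.2 of \cite{Po12b}):
$$|\zeta(F+G) - \zeta(F) - \zeta(G)| \leq \sqrt{2\,S(F,G)\,\|\{F,G\}\|_\infty}, \quad S(F,G) = \sup_{s>0}\min\{c(sF)+c(-sF),\, c(sG)+c(-sG)\}.$$
Set $G_k = F_1 + \cdots + F_k$ and apply this to the pair $(G_k, F_{k+1})$. Using $\zeta(F_{k+1}) = 0$, $S(G_k, F_{k+1}) \leq \pi r^2$ (from $c(sF_{k+1}) \leq \pi r^2$ and $c(-sF_{k+1}) \leq 0$ by monotonicity, since $-sF_{k+1}\le 0$), and $\|\{G_k, F_{k+1}\}\| \leq \nu_c(\vec f)$, one obtains $\zeta(G_{k+1}) \leq \zeta(G_k) + \sqrt{2\pi r^2 \nu_c(\vec f)}$. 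Telescoping $d$ times from $\zeta(G_1) = 0$ to $\zeta(G_{d+1}) = \zeta(1) = 1$ gives $1 \leq d\sqrt{2\pi r^2 \nu_c(\vec f)}$, i.e.\ $\nu_c(\vec f) \geq \frac{1}{2 d^2 \pi r^2}$. Note the combinatorial factor is $d$ (the number of telescoping steps), not $m$ or $d+1$. Without carrying out this chain, the constant $C(d) = 1/(2d^2)$ has not actually been derived.
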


In Theorem 4.8 of \cite{Po12b}, the estimate appearing in Theorem \ref{theo_1} is proven for more general coverings but under the assumption that $\omega|_{\pi_2(M)} = 0$; see Remark \ref{rem:aspherical_case} for a related discussion.  

To handle more general symplectic manifolds, Polterovich introduces the notion of $p$-regularity:  For a subset $Z \subset M$ define its star $St(Z) = \cup U_i$ where the union is taken over all $U_i$'s such that $\bar{Z} \cap \bar{U_i} \neq \emptyset.$

\noindent \textbf{A2.}  A covering $\mathcal{U}$ is said to be $p$-regular if for every $U_i$ there exists a Hamiltonian $F_i$ supported in the $p$-times iterated star of $U_i$, $St( \cdots (St(U_i)) \cdots)$ such that $\phi^1_{F_i}(U_i) \cap \bar{U_i} =\emptyset$.

\medskip

A covering is said to be $(d,p)$-regular if it is both $d$-regular and $p$-regular. It is proven in \cite{Po12b} that if $\mathcal{U}$ is $(d,p)$-regular then
$$pb(\mathcal{U}) \geq \frac{C(d,p)}{E(\mathcal{U})},$$
where $C(d, p)$ depends only on $d, p$.

The significance of Theorem \ref{theo_1} is that it allows us to remove the $p$-regularity condition on monotone manifolds, albeit for coverings which consist of symplectic balls. Intuitively, we expect $pb(\mathcal{U})$ to  be larger for more irregular covers and so it is expected that both $d$-regularity and $p$-regularity can be removed.  Theorem \ref{theo_1} is a first step towards this goal.

\begin{remark}
It is not explicitly stated in \cite{Po12b} that $C(d, p) = \frac{1}{2 (d^{2p} + 1)^2}.$ However, this can be extracted from Proposition 4.6, Theorem 4.8 and Corollary 5.3.
\end{remark}  

\begin{remark} \label{rem:aspherical_case}
A symplectic manifold $(M, \omega)$ is said to be aspherical if $\omega|_{\pi_2} = 0$.  Stronger versions of Theorems \ref{theo_bound}, \ref{theo_killer} \& \ref{theo_1} hold on such manifolds. The assumptions in the statements of these theorems relating to the monotonicity constant $\lambda$ become unnecessary.  Furthermore,  the assumptions regarding the displaceablity of $U$ in Theorem \ref{theo_killer} and $U_i$'s in Theorems  \ref{theo_bound} and \ref{theo_1}  can be entirely eliminated.  We will not prove any of the above statements in this article.  Their proofs are similar to, and in fact easier than, the proofs presented here.

With regards to  coverings on aspherical manifolds, our techniques do not allow us to obtain results as general as Theorem 4.8(ii) and Proposition 5.4 of \cite{Po12b}.
\end{remark}

\subsection*{Acknowledgments}  The fact that affirmative answers to Questions \ref{Leonid_que1}, \ref{Leonid_que2} would imply Theorem \ref{theo_1} was explained to me by  Leonid Polterovich during my visit to the University of Chicago in May of 2012.  I am very thankful to him for having invited me to Chicago, for suggesting these questions to me, and for many helpful conversations about the contents of this paper.  I am also very grateful to Claude Viterbo for several helpful discussions about computing the spectral invariants of spectral killers.  I would also like to thank Lev Buhovsky, Vincent Humili\`ere, R\'emi Leclercq and Fr\'ed\'eric Le Roux for interesting conversations relating to this paper.  Lastly, I would like to thank an anonymous referee for his/her valuable comments.

The research leading to these results has received funding from the European Research Council under the European Union's Seventh Framework Programme (FP/2007-2013) / ERC Grant Agreement  307062.

\section{Preliminaries on Spectral Invariants} \label{sec:sepctral_invariants}
In this section, we recall the aspects of the theory of spectral invariants required to prove Theorems \ref{theo_bound} and \ref{theo_killer}; for more details please see \cite{mcduff-salamon, Oh05b}.  Throughout this section, we suppose that $(M, \omega)$ is a closed, connected and monotone symplectic manifold of dimension $2n$.

\subsection*{The action functional and the spectrum}
 We denote by $\Omega_0(M)$ the space of contractible loops in $M$.  Define $\Gamma:= \frac{\pi_2(M)}{\ker(c_1)} = \frac{\pi_2(M)}{ \ker([\omega])}$; this is the group of deck transformations of the Novikov covering of $\Omega_0(M)$,  defined by
 $$\tilde{\Omega}_0(M) = \frac{ \{ [z,u]: z \in \Omega_0(M) , u: D^2 \rightarrow M , u|_{\partial D^2} = z \}}{[z,u] = [z', u'] \text { if } z=z' \text{ and } \bar{u} \# u' = 0 \text{ in } \Gamma},$$
   where $\bar{u} \# u'$ is the sphere obtained by gluing $u$, with its orientation reversed, to $u'$ along their common boundary. The disc $u$ in $[z, u]$, is referred to as the capping disc of the orbit $z$.  Recall that the action functional $\mathcal{A}_H: \tilde{\Omega}_0(M) \rightarrow \mathbb{R}$, associated to a Hamiltonian $H$, is defined by
   $$\mathcal{A}_H([z,u]) =  \int_{0}^{1} H(t,z(t))dt \text{ }- \int_{D^2} u^*\omega.$$
The set of critical points of $\mathcal{A}_H$, denoted by $\Crit(\mathcal{A}_H)$, consists of equivalence classes,  $[z,u] \in \tilde{\Omega}_0(M)$, such that $z$ is a $1$--periodic orbit of the Hamiltonian flow $\phi^t_H$.

It is well known that $Crit(\mathcal{A}_H) = \{ [z,u] : \text{ $z$ is a 1-periodic orbit of } \phi^t_H \}$ is the set of critical points of $\mathcal{A}_H$.  
The action spectrum of $H$, denoted by $Spec(H)$, is the set of critical values of $\mathcal{A}_H$; it has Lebesgue measure zero.  

\subsection*{The Conley--Zehnder index} When $H$ is non-degenerate $\Crit(\mathcal{A}_H)$ can be indexed by the well known Conley--Zehnder index  $\CZ: \Crit(\mathcal{A}_H) \rightarrow \mathbb{Z}$. Here, we will recall some facts about $\CZ$ without defining it.

Many conventions are used for normalizing $\CZ$.  Our convention is as follows: Suppose that $g$ is a $C^2$--small Morse function. We normalize the Conley--Zehnder index so that for every critical point $p$ of $g$, 
$$ \mu_\mathrm{CZ}([p, u_p]) = i_{\text{Morse}}(p) - n,$$
where $i_{\text{Morse}}(p)$ is the Morse index of $p$ and $u_p$ is a trivial capping disc.  For every $A \in \Gamma$, the Conley--Zehnder index satisfies the following identity
\begin{equation}\label{CZ-index identity}
   \CZ([z,u\#A]) = \CZ([z,u]) - 2 c_1(A).
\end{equation}

\subsection*{Spectral Invariants} Spectral invariants are defined for non-degenerate Hamiltonians via Hamiltonian Floer theory.  The Floer homology $HF_*(H)$ is filtered by values of the action functional.  One associates to a quantum homology class, viewed as a Floer homology class via the PSS isomorphism $\Phi_{PSS} : QH_*(M) \to HF_{*-n}(H)$ \cite{PSS}, the minimal action level at which it appears in $HF_*(H)$. The specific spectral invariant used in this article is the one associated to $[M] \in QH_*(M)$; we will denote it by $c(H)$ for $H \in C^{\infty}([0,1] \times M)$.

Well-known estimates for actions of Floer trajectories imply that $|c(H) - c(G)| \leq \int_{0}^{1} \max_{x\in M} |H_t -G_t|  dt $.  This inequality allows us to  define $c(H)$ for continuous Hamiltonians (such as spectral killers $K_\epsilon$): we set $c(H) = \lim c(H_i)$ where $H_i$ is a sequence of smooth and non-degenerate Hamiltonians converging uniformly to $H$.

We will now list, without proof, those properties of $c: C^{\infty}([0,1] \times M)  \rightarrow \mathbb{R}$ which will be use later on; see \cite{Oh05a, Oh05b, Sc00, usher} for further details. Recall that the composition of two Hamiltonian flows, $\phi^t_H \circ \phi^t_G$, and the inverse of a flow, $(\phi^t_H)^{-1},$ are Hamiltonian flows generated by $H\#G(t,x) = H(t,x) + G(t, (\phi^t_H)^{-1}(x))$ and $\bar{H}(t,x) = -H(t, \phi^t_H(x))$, respectively.
\begin{prop} \label{properties_spec}
    Let $(M, \omega)$ denote a monotone symplectic manifold of dimension $2n$.  The spectral invariant $c: C^{\infty}([0,1] \times M)  \rightarrow \mathbb{R}$ has the following properties:
    \begin{enumerate}   
   \item \label{normalization}(Normalization) $c(0) =0$.
    \item \label{monotonicity} (Monotonicity) If $H \leq G$ then $c(H) \leq c(G)$.
    \item \label{triangle} (Triangle Inequality) $c(H\#G) \leq c(H) + c(G)$.
    \item \label{continuity}(Continuity)  $|c(H) - c(G)| \leq \Vert H - G \Vert_{\infty}.$
    \item \label{spectrality} (Spectrality) $c(H) \in Spec(H)$, i.e. there exists $[z,u] \in \Crit(\mathcal{A}_H)$ such that $c(H) = \mathcal{A}_H([z,u])$. Moreover, if $H$ is non-degenerate then $\CZ([z,u]) = n$.  
    \end{enumerate}   
   \end{prop}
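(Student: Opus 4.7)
The plan is to verify each item by combining the defining Floer-theoretic construction of $c$ with standard properties of the PSS morphism and of Floer continuation maps. It suffices to work with smooth non-degenerate Hamiltonians, because all five statements then extend to smooth (and, via the passage described before the statement, continuous) Hamiltonians by the uniform estimate in (4) together with the density of non-degenerate Hamiltonians in $C^\infty([0,1]\times M)$.

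I would first dispatch (4) and then use it to bootstrap (1). For (4), the workhorse is the Floer continuation estimate: given a monotone interpolating homotopy between $H$ and $G$, the induced chain map shifts the action filtration by at most $\int_0^1 \max_M(G_t-H_t)\,dt$. Applying this and its symmetric version to representatives of $\Phi_{PSS}([M])$ on each side yields $|c(H)-c(G)| \leq \|H-G\|_{(1,\infty)} \leq \|H-G\|_\infty$. For (1), I would approximate $0$ by a $C^2$-small Morse function $g$; by the Morse/Floer comparison in the monotone regime, the PSS representative of $[M]\in QH_{2n}(M)$ in $HF_n(g)$ is carried by the (unique, up to recapping) maximum of $g$ with its trivial capping, whose action is $\int_0^1 g(\max)\,dt$. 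Letting $g\to 0$ uniformly and using (4) gives $c(0)=0$.

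For (2) Monotonicity I would revisit the continuation map: when $H\leq G$, the energy density $\partial_s H^s$ can be chosen pointwise non-positive, so the action shift is non-positive and the PSS image of $[M]$ in $HF_*(G)$ is represented by a cycle whose minimal action is at least $c(H)$; this forces $c(G)\geq c(H)$. For (3) the Triangle Inequality, I would appeal to the pair-of-pants product $\ast : HF_*(H)\otimes HF_*(G)\to HF_*(H\#G)$, together with its compatibility with the PSS isomorphism and the quantum product on $QH_*(M)$. Since $[M]\ast[M]=[M]$, any choice of Floer cycles $\alpha\in CF_*(H)$, $\beta\in CF_*(G)$ with $[\alpha]=\Phi_{PSS}([M])=[\beta]$ produces a cycle $\alpha\ast\beta$ representing $\Phi_{PSS}([M])$ in $HF_*(H\#G)$; the standard action estimate on pair-of-pants solutions then gives $c(H\#G)\leq \mathcal{A}_H(\alpha)+\mathcal{A}_G(\beta)$, and taking the infimum on the right yields (3).

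I expect the main obstacle to be (5), the Spectrality property. For the existence of a critical point realizing $c(H)$, the key input is that in the monotone setting the period group $\Gamma$ has rational image in $\mathbb{R}$, which makes $\Spec(H)$ a closed, nowhere dense subset of $\mathbb{R}$; combined with the fact that $c(H)$ is defined as an infimum of action values over Floer cycles representing $\Phi_{PSS}([M])$ and with the lower-semicontinuity provided by Floer's compactness theorem, one concludes that this infimum is attained at some $[z,u]\in\Crit(\mathcal{A}_H)$. This is exactly Oh's spectrality argument and I would follow it verbatim. For the Conley--Zehnder refinement, I would use that $[M]\in QH_{2n}(M)$ has degree $2n$, so $\Phi_{PSS}([M])\in HF_n(H)$, and any non-degenerate generator $[z,u]$ realizing $c(H)$ must satisfy $\CZ([z,u])=n$ by degree reasons.
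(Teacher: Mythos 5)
The paper does not prove Proposition~\ref{properties_spec}: it is stated explicitly ``without proof'' with a pointer to \cite{Oh05a, Oh05b, Sc00, usher}, so there is no in-paper argument to compare yours against. Your sketch correctly reproduces the standard Floer-theoretic derivations found in those references: the one-sided action estimate for monotone continuation homotopies (giving (1), (2) and the $(1,\infty)$-Lipschitz bound, hence (4)), the compatibility of the pair-of-pants product with PSS and the fact that $[M]$ is the quantum unit (giving (3)), and Oh's spectrality argument together with the degree shift $QH_{2n}(M)\to HF_n(H)$ under PSS (giving (5)). One small phrasing note on (2): the cleaner way to state the continuation step is that the map $CF_*(G)\to CF_*(H)$ induced by a monotone decreasing homotopy is action non-increasing and sends a PSS-representative of $[M]$ to a PSS-representative of $[M]$, so $c(H)\leq c(G)$; and on (5), ``rational'' should be ``discrete'' (the image of $\Gamma$ under $[\omega]$ is $\lambda c_1(\Gamma)$, a discrete subgroup of $\R$ in the monotone case), which is what actually makes $\Spec(H)$ closed and nowhere dense. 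Apart from these cosmetic points, the plan is sound and matches the cited literature.
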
 
	

\section{Proofs of Theorems \ref{theo_bound} \& \ref{theo_killer}}\label{sec:proof_killer}
The main objective of this section is to prove Theorems \ref{theo_bound} \& \ref{theo_killer}.   We will be needing the following lemma.
\begin{lemma}\label{c_pos}
Let $U$ denote an open subset of $(M, \omega)$.  If there exists a constant $E > 0$ such that  $c(H) \leq E$ for any Hamiltonian $H$ whose support is contained in $U$, then $0 \leq c(H)$ for any Hamiltonian supported in $U$.
\end{lemma}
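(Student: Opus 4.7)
The plan is to argue by contradiction and exploit iteration of $\phi^1_H$, using the fact that the inverse Hamiltonian $\bar H$ remains supported in $U$ whenever $H$ is. The main preliminary fact I would establish first is the inequality $c(H) + c(\bar H) \geq 0$ for all Hamiltonians $H$.

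For this, recall that $\bar H(t,x) = -H(t, \phi^t_H(x))$ and that $\phi^t_{\bar H} = (\phi^t_H)^{-1}$. A direct substitution using the formula $H \# G (t,x) = H(t,x) + G(t, (\phi^t_H)^{-1}(x))$ gives $H \# \bar H (t,x) = H(t,x) + \bar H(t, (\phi^t_H)^{-1}(x)) = H(t,x) - H(t,x) = 0$. Hence, by the normalization $c(0) = 0$ and the triangle inequality from Proposition~\ref{properties_spec}, $0 = c(H \# \bar H) \leq c(H) + c(\bar H)$. Moreover, since $\phi^t_H$ equals the identity outside $U$, both $\bar H$ and every iterated concatenation $H \# \cdots \# H$ are again supported in $U$.

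Now suppose, for contradiction, that some Hamiltonian $H_0$ supported in $U$ satisfies $c(H_0) = -\delta < 0$. Define the $k$-fold iterate $H_0^{\#k} := H_0 \# H_0 \# \cdots \# H_0$, which generates $(\phi^t_{H_0})^k$ and is supported in $U$ by the previous remark. The triangle inequality applied inductively gives $c(H_0^{\#k}) \leq k\, c(H_0) = -k\delta$. Applying the inequality $c(F) + c(\bar F) \geq 0$ to $F = H_0^{\#k}$ yields $c(\overline{H_0^{\#k}}) \geq -c(H_0^{\#k}) \geq k\delta$. But $\overline{H_0^{\#k}}$ is also supported in $U$, so the hypothesis of the lemma forces $k\delta \leq E$ for every $k \geq 1$, a contradiction once $k > E/\delta$. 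Therefore $c(H) \geq 0$ for every $H$ supported in $U$.

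The proof is essentially formal once one has the properties in Proposition~\ref{properties_spec} in hand. The only observation requiring a moment's thought is the pointwise identity $H \# \bar H \equiv 0$ and the resulting inequality $c(H) + c(\bar H) \geq 0$; from there, the argument is a straightforward iterate-and-compare-with-the-hypothesized-upper-bound contradiction, so I do not anticipate a serious obstacle.
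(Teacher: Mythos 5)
Your proof is correct and follows essentially the same route as the paper: both establish $c(H) + c(\bar H) \geq 0$ via $H \# \bar H = 0$, use that $\bar H$ and the iterates $H^{\# k}$ remain supported in $U$, and then iterate a hypothetical $H$ with $c(H) < 0$ to produce a Hamiltonian supported in $U$ whose spectral invariant violates the bound. The only cosmetic difference is that the paper first derives the two-sided bound $-E \leq c(H) \leq E$ and lets the iterate contradict the lower bound, whereas you pass to the inverse of the iterate and contradict the upper bound directly; these are the same argument.
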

\begin{proof}
Recall that the inverse flow $(\phi^t_H)^{-1}$ is generated by $\bar{H}(t,x) = -H(t, \phi^t_H(x))$.  Since $\bar{H} \# H = 0,$ using the triangle inequality we obtain  $-c(\bar{H}) \leq c(H).$  The Hamiltonian $\bar{H}$ is supported in $U$ and so  $c(\bar{H}) \leq E $.  Hence, it follows that  
\begin{equation}\label{lowerbound}
- E \leq c(H) \leq E
\end{equation}
For a contradiction, suppose that $c(H) < 0$.  By the triangle inequality, $c(H\# \cdots \# H) \leq m \, c(H)$, where $m$ denotes the total number of $\#'s$.  Taking $m$ to be sufficiently  large we find a Hamiltonian supported in $U$ with spectral invariant smaller than $- E$.  But this contradicts inequality \ref{lowerbound}.
\end{proof}

Note that a displaceable set $U \subset M$ would satisfy the conditions of the above lemma.
Next, we prove Theorem \ref{theo_bound} assuming Theorem \ref{theo_killer}.

\begin{proof}[Proof of Theorem  \ref{theo_bound}]
Let $H_i$ denote the restriction of $H$ to $U_i$ so that $H = \sum_{i=1}^{k}{H_i}.$  Recall that by the Energy-Capacity inequality $|c(H_i)| \leq E_i$, where $E_i$ denotes the displacement energy of $U_i$.  Now the $H_i's$ have disjoint supports and so $H_i \# H_j = H_i + H_j$.  It follows from the triangle inequality \ref{triangle} that $c(H) \leq \sum{c(H_i)} \leq \sum{E_i}$.  It follows from Lemma \ref{c_pos} that $0 \leq c(H)$.

It remains to show that $c(H) \leq \pi r^2$.  By Theorem \ref{theo_killer}, we can pick spectral killers $K_i$, supported in $U_i$, such that $c(H_i + K_i) = 0$.  
Note that the Hamiltonians $\{H_i, K_j\}$  have mutually disjoint supports.  Therefore, the $\#$ operation among these Hamiltonians is just simple addition of functions.  Hence,  
$$(H_1 + K_1) \# \cdots \# (H_k + K_k) = \sum{H_i+K_i} = H + \sum{K_i}.$$  
Using the triangle inequality, we obtain that $c( H +  \sum{K_i} ) \leq  \sum{c(H_i + K_i)} = 0$.  On the other hand, the Hamiltonian $H +  \sum{K_i}$ is supported in $U_1 \sqcup \cdots \sqcup U_k$ and we have already shown that any such Hamiltonian has non-negative spectral invariant.  Therefore, $c( H +  \sum{K_i} ) = 0.$

Now, by the Continuity property of spectral invariants $c(H) - c( H +  \sum{K_i} ) \leq \Vert  \sum{K_i} \Vert_{\infty}$ and thus $c(H) \leq \Vert  \sum{K_i} \Vert_{\infty}.$  Note that $\Vert  \sum{K_i} \Vert_{\infty} = \pi r^2:$ This is because $\Vert K_i\Vert_{\infty} = \pi r_i^2$ and the $K_i$'s have disjoint supports.  This completes our proof.
\end{proof}

We finish this section with a proof of Theorem \ref{theo_killer}.
\begin{proof}[Proof of Theorem \ref{theo_killer}]
Throughout this proof we identify $U$ with $B_r,$ the symplectic ball of radius $r$. Let $m$ denote a positive number which is larger than the maximum of $H + K_{\epsilon}$.
Define $F: M \rightarrow \mathbb{R}$ to be an autonomous Hamiltonian with the following properties:

\begin{enumerate}
\item $F|_{B_r}$ has the form $F(z) = f(\frac{|z|^2}{2})$, where $f:[0, \infty) \rightarrow \mathbb{R}$ and $f = 0$ on $[\frac{(r -  \epsilon)^2}{2}, \infty)$.  Thus, $F$ is radial and is supported in $B_{r - \epsilon}$.
\item $f = m$ on $[0, \frac{(r - 4 \epsilon)^2}{2})$.  Thus,  $F = m$ on  $B_{r - 4 \epsilon}$.
\item $f$ decreases linearly on $[\frac{(r - 4 \epsilon)^2}{2}, \frac{(r - 3 \epsilon)^2}{2})$.  
\item $f = -\pi r^2$  on $[\frac{(r - 3 \epsilon)^2}{2}, \frac{(r - 2 \epsilon)^2}{2}).$  Thus, $F(z) = -\pi r^2$ on $\{z : r - 3 \epsilon \leq|z| \leq r - 2 \epsilon \}.$
\item $f$ increases linearly on $[\frac{(r - 2 \epsilon)^2}{2}, \frac{(r -  \epsilon)^2}{2})$.  
\end{enumerate}

\begin{figure}
\centering
\includegraphics{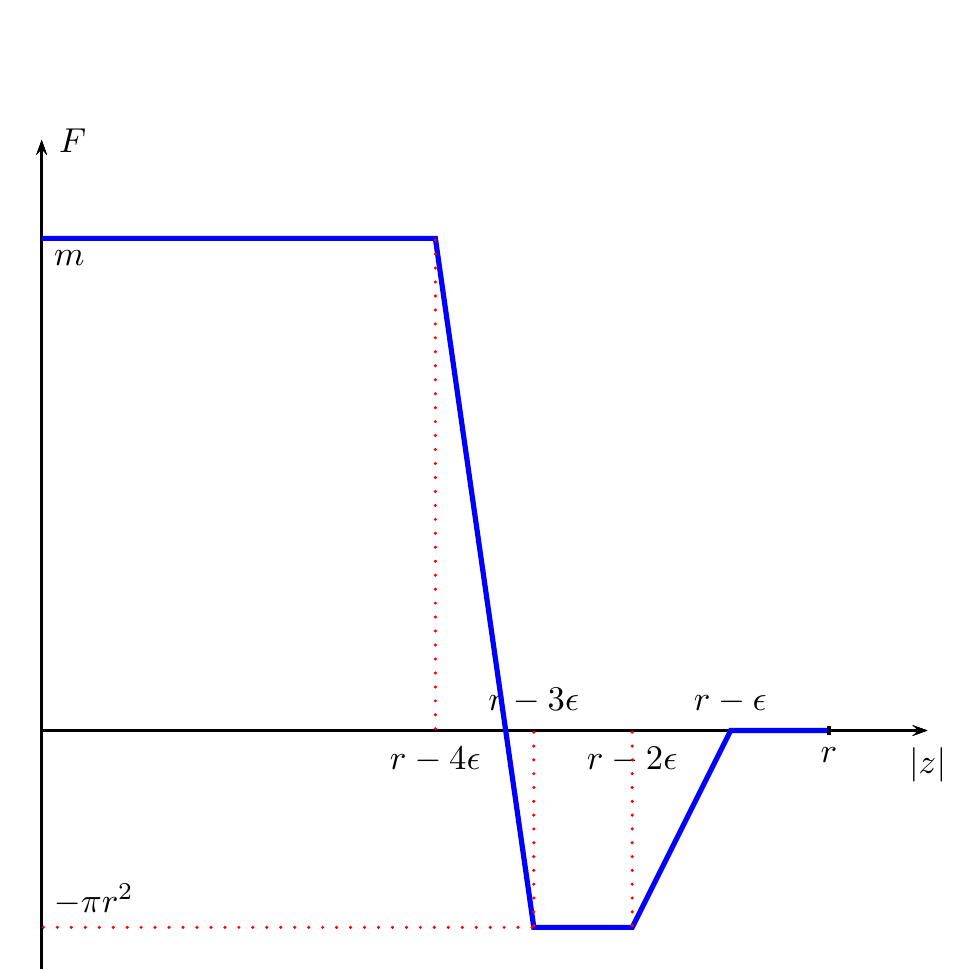}
\caption{Graph of the function F.}
\label{fig:F_graph}
\end{figure}

The graph of $F$ is depicted in Figure \ref{fig:F_graph}. To establish Theorem \ref{theo_killer} it is sufficient to prove that $c(F) = 0$:  Note that $H+ K_\epsilon \leq F$.  Hence, $c(H+ K_{\epsilon}) \leq 0$ by the Monotonicity property \ref{properties_spec}.    On the other hand, the set $U$ is displaceable and Lemma \ref{c_pos} implies that $c(H + K_\epsilon) \geq 0$. 

\medskip

The rest of this proof is dedicated to showing that $c(F) = 0$.  We will denote the displacement energy of $U$ by $E$.  It follows from the Energy-Capacity inequality appearing in Proposition 3.1 of \cite{usher} that $|c(F)| \leq E.$  Combining this with Lemma \ref{c_pos} we obtain
\begin{equation}\label{c_range}
c(F) \in [0, E].
\end{equation}
Furthermore, the Energy-Capacity inequality from Proposition 3.1 of \cite{usher} combined with the argument in Remark \ref{rem:upperbound_optimal}, implies that $\pi r^2 \leq E$, and hence we get
\begin{equation}\label{E_range}
\pi r^2 \leq E < \frac{|\lambda|}{2}.
\end{equation}

Here is an overview of our strategy for computing $c(F)$: We will perturb $F$, in several stages, to a smooth and non-degenerate Hamiltonian which we will continue to denote by $F$.  These perturbations will be performed in a fashion which ensures that all the non-trivial 1-periodic orbits of $F$  appear near $|z| \approx \; r - 4 \epsilon, \; r - 3 \epsilon, \; r - 2 \epsilon, \; r -  \epsilon.$  By Proposition \ref{properties_spec}, $c(F)$ will be attained by a capped 1-periodic orbit of $F$ with CZ index $n$.  We will show that the action of any such capped 1-periodic orbit is either approximately zero or falls outside of the half-open interval $(0, E]$.  Of course, in light of \eqref{c_range}, this forces $c(F)$ to be zero.  The fact that $F$ takes the value $-\pi r^2$ in the region $\{z : r - 3 \epsilon \leq|z| \leq r - 2 \epsilon \}$ is immensely important: Indeed, if the value of $F$ in this region was larger than $-\pi r^2$, then one would find a capped 1-periodic orbit near $|z| \approx  r - 3 \epsilon$ with CZ index $n$ and action inside the region $(0, E]$.  Furthermore, $c(F)$ would be attained by the action of this orbit, and so $c(F)$ would be larger than zero; see Remark \ref{rem:action_carrying_orbit} below.

\medskip

We begin the proof by replacing $f$ with a close by (in uniform norm) smooth function, which we continue to denote by $f$: Note that the non-trivial 1-periodic orbits of $F$  appear at values of $|z|$ such that $f'(\frac{|z|^2}{2}) = 2 \pi l$, where $l$ denotes a non-zero integer.  We take a smoothing of $f$ such that this occurs at $|z| \approx \; r - 4 \epsilon, \; r - 3 \epsilon, \; r - 2 \epsilon, \; r -  \epsilon.$  We leave it to the reader to verify that such smoothings do exist.  

At this point the Hamiltonian $F$ is smooth but degenerate.  Its 1-periodic orbits can be classified into seven sets: four sets of non-trivial 1-periodic orbits, as described in the previous paragraph, and three sets of trivial 1-periodic orbits corresponding to where the perturbed function is constant.  These trivial 1-periodic orbits appear inside the regions  $B_{r - \frac{7 \epsilon}{2}}, \; \{z : r - \frac{ 7 \epsilon} {2} < |z| < r - \frac{3\epsilon }{2} \},$ and $M \setminus B_{r- \frac{3\epsilon}{2}}$.  In what follows, we will  perturb $F$ to a non-degenerate Hamiltonian in seven steps.  At each step we analyze one of the above collections of 1-periodic orbits.  We first deal with the trivial 1-periodic orbits of $F$.\\

\noindent \textbf{Step 1: 1-periodic orbits in $B_{r -  \frac{7 \epsilon}{2}}$.} 

\noindent  In this region $F$ is constant and equal to $m$.  We make a $C^2$-small modification of $f$ on $[0, \frac{(r - 4 \epsilon)^2}{2})$ such that $f$ is left with a unique  critical point at zero and no other critical points in this interval;  the critical point at zero will be a maximum. After this modification, the (capped) 1-periodic orbits of $F$ in this region will be non-degenerate and of the form $[0, A]$ where $A \in \pi_2(M)$.  Of these, only $[0, u_0]$ has CZ-index $n$; here $u_0$ denotes the trivial capping disc.
The action of this orbit is $F(0) \approx m$.  By taking $m$ to be sufficiently large, we see, using Equation \eqref{c_range}, that $c(F)$ is not attained here.\\

\noindent \textbf{Step 2: 1-periodic orbits in $\{z : r - \frac{ 7 \epsilon} {2} < |z| < r - \frac{3\epsilon }{2} \}.$} 

\noindent  In this region $F$ is constant and equal to $-\pi r^2$.  After making a $C^2$-small autonomous perturbation, the restriction of $F$ to this region will have a finite number of non-degenerate critical points.  The corresponding capped 1-periodic orbits of $F$ in this region will be of the form $[p, A],$ where $p$ is a critical point of $F$ and $A \in \pi_2(M)$. The action of $[p, A]$ is approximately  $-\pi r^2 - \omega(A) = -\pi r^2 - \lambda c_1(A)$.  Inequality \eqref{E_range} implies that the smallest non-negative number of this form is larger than $E$.  Hence, we see, using Equation \eqref{c_range}, that $c(F)$ can not be attained by any of these orbits.\\

\noindent \textbf{Step 3: 1-periodic orbits in $M \setminus B_{r- \frac{3\epsilon}{2}}$.} 

\noindent  In this region, $F$ is constant and equal to $0$.  After making a $C^2$-small autonomous perturbation, the restriction of $F$ to this region will have a finite number of non-degenerate critical points.  The corresponding capped 1-periodic orbits of $F$ in this region will be of the form $[p, A],$  where $p$ is a critical point of $F$ and $A \in \pi_2(M)$. The action of $[p, A]$ is approximately  $ - \omega(A) =  - \lambda c_1(A)$.  Inequality \eqref{E_range} implies that the smallest positive number of this form is larger than $E$.  Hence, we see, using Equation \eqref{c_range}, that if $c(F)$ is attained by one of these orbits, then $c(F) \approx 0$.\\

Next, we analyze the non-trivial 1-periodic orbits of $F$.  As mentioned earlier, these orbits appear at values of $|z|$ such that $f'(\frac{|z|^2}{2}) = 2 \pi l$, where $l$ denotes a non-zero integer.  Let $z(t)$ be one such orbit and denote by $u$ a capping of $z(t)$ which is contained entirely inside $B_r$.  A simple computation would reveal that the action of this orbit is given by the expression: \begin{equation}\label{action} \mathcal{A}_F([z(t), u]) = f(\frac{|z|^2}{2}) - f'(\frac{|z|^2}{2}) \frac{|z|^2}{2}. \end{equation}

 Consider a value of $|z|$ where $f'(\frac{|z|^2}{2}) = 2 \pi l$.  At such a value of $|z|$ the periodic orbits form a $(2n-1)$-dimensional sphere; denote this sphere by $S_l$.  We will now describe our procedure for perturbing $F$ to a non-degenerate Hamiltonian:  pick a Morse function $h: S_l \rightarrow \mathbb{R}$ with exactly two critical points.  Identify $S_l \times (1- 2\varepsilon, 1+ 2\varepsilon)$ with a tubular neighborhood of $S_l$  via the map $(z, a) \mapsto az$, and extend $h$ to this neighborhood by setting $h(z,a) = h(z)g(a)$ where $g$ is any function with compact support in $(1- 2\varepsilon, 1+ 2\varepsilon)$ and such that $g|_{ (1- \varepsilon, 1+ \varepsilon)}  = 1$.   Consider the Hamiltonian $F + \delta h, \, \delta >0$.  For sufficiently small values of $\delta$, the Hamiltonian $F + \delta h$ will have exactly two 1-periodic orbits near $S_l$ corresponding to the two critical points of $h$.  We repeat the described perturbation near each sphere $S_l$.  In the end, we obtain a non-degenerate Hamiltonian, which we continue to denote by $F$, with two 1-periodic orbits near each value of $|z|$ such that $f'(\frac{|z|^2}{2}) = 2 \pi l$.  Since these perturbations are $C^2$-small, we can approximate the actions of these  orbits using Equation \eqref{action}.  For further details see \cite{CFHW, Oa}.  Here, we are following, very closely, Section 3.3 of \cite{Oa}.

  In \cite{Oa}, Oancea considers radial Hamiltonians of the form $f(\frac{|z|^2}{2})$ in $\mathbb{R}^{2n}$ and computes the Conley-Zehnder indices of the 1-periodic orbits which arise as a result of the perturbation performed in the previous paragraph.   Let $z^l_1(t), \; z^l_2(t)$ denote the two 1-periodic orbits of $F$ near $S_l$, corresponding respectively to the minimum and maximum of $h$. Extracting the indices of these orbits from Section 3.3 of \cite{Oa} we obtain:
	\begin{equation} \label{CZ_1}
 \mu_{CZ}([z^l_1(t), u] ) = \left\{ \begin{array}{ll}
         -2ln - n & \mbox{if $f''(\frac{|z|^2}{2}) > 0$ near $S_l$};\\
         -2ln -n + 1 & \mbox{if $ f''(\frac{|z|^2}{2}) < 0$ near $S_l$}.\end{array} \right. \end{equation}

\begin{equation} \label{CZ_2}
 \mu_{CZ}([z^l_2(t), u] ) = \left\{ \begin{array}{ll}
         -2ln + n -1  & \mbox{if $f''(\frac{|z|^2}{2}) > 0$ near $S_l$};\\
         -2ln + n  & \mbox{if $ f''(\frac{|z|^2}{2}) < 0$ near $S_l$}.\end{array} \right. \end{equation}
	
In the above, $u$ is the capping disc contained entirely in $B_r$.  Here, we will not perform the lengthy calculations needed to obtain these formulas.  Computing indices for radial Hamiltonians is folklore knowledge; we refer the interested reader to Section 3.3 of \cite{Oa}.  For comparing the above formulas with the formula presented towards the end of Section 3.3 of \cite{Oa}, we ask the reader to keep the following two points in mind:
\begin{itemize}
\item Oancea computes the negative of the Conley-Zehnder indices.  Hence, our formulas have the opposite sign.
\item In \cite{Oa} $f$ is convex, i.e. $f'' > 0$.  However, in our case $f'' < 0$ near $|z| \approx \; r - 4 \epsilon, \; r -  \epsilon$ and $f'' > 0$ near $|z| \approx \; r - 3 \epsilon, \; r - 2 \epsilon$.  To get the correct CZ indices in the cases where $f'' < 0$ one must add $1$ to the corresponding CZ indices from the cases where $f'' > 0$.
We leave it to the reader to check that this follows from the computations in \cite{Oa}.
\end{itemize}


The capped 1-periodic orbits of $F$  are of the form $[z^l_i(t), u \# A]$, where $z^l_i(t)$ are as described above, $u$ is a capping for $z^l_i(t)$ contained entirely in $B_r$, and $A \in \pi_2(M)$. Furthermore, recall that these orbits occur at  $|z| \approx \; r - 4 \epsilon, \; r - 3 \epsilon, \; r - 2 \epsilon, \; r -  \epsilon.$  We will now analyze the actions and indices of these orbits. 

\medskip

\noindent \textbf{Step 4: 1-periodic orbits near $|z| \approx \; r - 4 \epsilon$.} 

Using Equations \eqref{CZ_1}, \eqref{CZ_2}, and \eqref{CZ-index identity} we get the following values for the CZ indices of such orbits (note that here $f'' < 0$):
$$ \mu_{CZ}([z^l_i(t), u \# A]) = \left\{ \begin{array}{ll}
         -2ln - n  + 1 - 2c_1(A) & \mbox{$i=1$};\\
         -2ln + n - 2c_1(A) & \mbox{$i=2$}.\end{array} \right.$$
				
Recall that $c(F)$ is attained by a 1-periodic orbit of index $n$.  We see from the above formula that the capped orbit $[z^l_i(t), u \# A]$ has index $n$ only when $i=2$ and $c_1(A) = -ln$.
Using Equation \eqref{action}, we see that the action of this orbit is:
$$\mathcal{A}_F([z^l_2(t), u\#A]) \approx f(\frac{(r - 4 \epsilon)^2}{2}) - f'(\frac{(r - 4 \epsilon)^2}{2}) \frac{(r - 4 \epsilon)^2}{2} - \omega(A)$$
$$\approx m - l \pi (r - 4 \epsilon)^2 - \lambda c_1(A) = m - l \pi (r - 4 \epsilon)^2 - \lambda (-ln)$$ $$ = m + l (n \lambda - \pi (r - 4 \epsilon)^2 ).$$
  By increasing $m$, we may assume that it is an integer multiple of $(n \lambda - \pi (r - 4 \epsilon)^2)$.  Of course, this implies the actions of all such orbits are integer multiples of $(n \lambda - \pi (r - 4 \epsilon)^2)$.  Inequality \eqref{E_range} implies that the smallest positive number of this form (i.e. $|n \lambda - \pi (r - 4 \epsilon)^2|$) is larger than $E$.  Hence, we see, using Equation \eqref{c_range}, that if $c(F)$ is attained by one of these orbits, then $c(F) \approx 0$. 
	
\medskip

\noindent \textbf{Step 5: 1-periodic orbits near $|z| \approx \; r - 3 \epsilon$.} 

Using the same reasoning as in Step 4, we see that the CZ indices of these orbits are given by (note that here $f'' > 0$): 
$$ \mu_{CZ}([z^l_i(t), u \# A]) = \left\{ \begin{array}{ll}
         -2ln - n   - 2c_1(A) & \mbox{$i=1$};\\
         -2ln + n -1 - 2c_1(A) & \mbox{$i=2$}.\end{array} \right.$$
				
We see from the above formula that the capped orbit $[z^l_i(t), u \# A]$ has index $n$ only when $i=1$ and $c_1(A) = -n(l+1)$; the action of this orbit is: 
 $$\mathcal{A}_F([z^l_1(t), u\#A]) \approx f(\frac{(r - 3 \epsilon)^2}{2}) - f'(\frac{(r - 3 \epsilon)^2}{2}) \frac{(r - 3 \epsilon)^2}{2} - \omega(A)$$
$$\approx -\pi r^2 - l \pi (r - 3 \epsilon)^2 - \lambda c_1(A) = -\pi r^2 - l \pi (r - 3 \epsilon)^2 - \lambda (-n(l+1))$$
$$= (n \lambda - \pi r^2) + l (n \lambda - \pi (r - 3 \epsilon)^2 ).$$

Now, near $ \frac{(r - 3 \epsilon)^2}{2}$, $f$ is decreasing and hence $f'(\frac{(r - 3 \epsilon)^2}{2}) = 2 \pi l$ is negative.  Therefore, $l$ is negative.  

First, suppose that $ l \leq -2$.  In this case, one can check, using Inequality \eqref{E_range}, that the above action value is negative if $\lambda > 0$, and it is larger than $E$ if $\lambda < 0.$  Of course, Equation  \eqref{c_range} rules out such values for $c(F)$. 

 If $l= -1$, then the action value is $\pi (r - 3 \epsilon)^2 - \pi r^2$. This value is negative as well and so  $c(F)$ is not attained here either. 

\begin{remark} \label{rem:action_carrying_orbit}
It is precisely here, in the case $l=-1$ where we use the fact that the spectral killer $K_{\epsilon}$, and the function $F$, take the value $-\pi r^2$ on $\{z : r - 3 \epsilon \leq|z| \leq r - 2 \epsilon \}$. 

 If we were to modify $F$ such that it would take the value $a$ in this region, where $ -\pi r^2 < a \leq 0,$ then the orbit considered above in the case $l=-1$ would have action $\pi (r - 3 \epsilon)^2 + a,$ which falls in the range $(0,E)$, and hence it could be the case that $c(F)$ is attained by this \emph{non-zero} value. 
 
\end{remark}

\medskip
				
\noindent \textbf{Step 6: 1-periodic orbits near $|z| \approx \; r - 2 \epsilon$.} 

Using the same reasoning as in the previous step, we see that the CZ indices of these orbits are given by (note that here $f'' > 0$): 
$$ \mu_{CZ}([z^l_i(t), u \# A]) = \left\{ \begin{array}{ll}
         -2ln - n   - 2c_1(A) & \mbox{$i=1$};\\
         -2ln + n -1 - 2c_1(A) & \mbox{$i=2$}.\end{array} \right.$$
				
We see from the above formula that the capped orbit $[z^l_i(t), u \# A]$ has index $n$ only when $i=1$ and $c_1(A) = -n(l+1)$; the action of this orbit is: 
 $$\mathcal{A}_F([z^l_1(t), u\#A]) \approx f(\frac{(r - 2 \epsilon)^2}{2}) - f'(\frac{(r - 2 \epsilon)^2}{2}) \frac{(r - 2 \epsilon)^2}{2} - \omega(A)$$
$$\approx -\pi r^2 - l \pi (r - 2 \epsilon)^2 - \lambda c_1(A) = -\pi r^2 - l \pi (r - 2 \epsilon)^2 - \lambda (-n(l+1))$$
$$= (n \lambda - \pi r^2) + l (n \lambda - \pi (r - 2 \epsilon)^2 ).$$

Now, near $ \frac{(r - 2 \epsilon)^2}{2}$, $f$ is increasing and hence $f'(\frac{(r - 2 \epsilon)^2}{2}) = 2 \pi l$ is positive.  Therefore, $l$ is positive. 
One can check, using Inequality \eqref{E_range}, that the above action value is negative if $\lambda < 0$, and it is larger than $E$ if $\lambda > 0.$  Hence, we see, using Equation  \eqref{c_range}, that $c(F)$ is not attained by any of these orbits.

\medskip

\noindent \textbf{Step 7: 1-periodic orbits near $|z| \approx \; r -  \epsilon$.} 

Using the same reasoning as in the previous steps, we see that the CZ indices of these orbits are given by (note that here $f'' < 0$): 
$$ \mu_{CZ}([z^l_i(t), u \# A]) = \left\{ \begin{array}{ll}
         -2ln - n  + 1 - 2c_1(A) & \mbox{$i=1$};\\
         -2ln + n - 2c_1(A) & \mbox{$i=2$}.\end{array} \right.$$
				
We see from the above formula that the capped orbit $[z^l_i(t), u \# A]$ has index $n$ only when  $i=2$ and $c_1(A) = -ln$; the action of this orbit is: 

$$\mathcal{A}_F([z^l_2(t), u\#A]) \approx f(\frac{(r -  \epsilon)^2}{2}) - f'(\frac{(r -  \epsilon)^2}{2}) \frac{(r -  \epsilon)^2}{2} - \omega(A)$$
$$\approx  0 - l \pi (r -  \epsilon)^2 - \lambda (-ln)$$ $$ =   l (n \lambda - \pi (r -  \epsilon)^2 ).$$

Now, near $ \frac{(r -  \epsilon)^2}{2}$, $f$ is increasing and hence $f'(\frac{(r - \epsilon)^2}{2}) = 2 \pi l$ is positive.  Therefore, $l$ is positive. 
One can check, using Inequality \eqref{E_range}, that the above action value is negative if $\lambda < 0$, and it is larger than $E$ if $\lambda > 0.$  Hence, we see, using Equation  \eqref{c_range}, that $c(F)$ is not attained by any of these orbits.

\medskip

 In summary, through steps 1-7, we have shown that the actions of 1-periodic orbits of $F$ with Conley-Zehnder index $n$ are  either approximately zero or fall outside the interval $(0, E]$.  Hence, we conclude that $c(F) = 0$.
\end{proof}

\section{Proof of Theorem \ref{theo_1}}\label{sec:proof_theo_1}
As mentioned in the introduction, our main motivation for seeking results in the spirit of Theorem \ref{theo_bound} is the application of such results to the theory of Poisson bracket invariants.  In this section we will explain how Theorem \ref{theo_1} follows from Theorem \ref{theo_bound}.   This proof is not due to the author; it was explained to us by Leonid Polterovich.  This proof can also be extracted from Section 5 of his article \cite{Po12b}. 

\subsection*{A partial symplectic quasi-state}  We begin by introducing the  \emph{partial symplectic quasi-state} $\zeta: C^{\infty}(M) \rightarrow \mathbb{R}$, defined as follows:
$$\zeta(F) = \lim_{k \to\infty} \frac{c(kF)}{k},$$
where $c$ is the spectral invariant defined in Section \ref{sec:sepctral_invariants}. The functional $\zeta$ was introduced by Entov and Polterovich in \cite{EP06}.  We will need the following two properties of $\zeta$:

\begin{enumerate}
   \item (Monotonicity) If $H \leq G$ then $\zeta(H) \leq \zeta(G)$,
   \item (Normalization) $\zeta(C) =C$, for any constant  $C \in \mathbb{R}$. 
\end{enumerate}
The above two properties can be deduced from Proposition \ref{properties_spec}.

\subsection*{The Poisson bracket inequality} For $F, G \in C^{\infty}(M)$ define $$\Pi(F,G) =| \zeta(F+G) - \zeta(F) -\zeta(G)|$$ and $$S(F,G) = \sup_{s>0} \min\{c(sF) + c(-sF), c(sG) + c(-sG)\}.$$
The Poisson bracket inequality states that $$\Pi(F,G) \leq \sqrt{2S(F,G) \Vert \{F,G\} \Vert_{\infty}}.$$
For a proof see Proposition 5.2 of \cite{Po12b}.  A slightly different version of this inequality was introduced and proven in \cite{EPZ07}.

\begin{proof}[Proof of Theorem \ref{theo_1}]
  Because the covering $\mathcal{U} = \{U_1, \cdots, U_N \}$ is $d$-regular, it can be partitioned into (at most) $d+1$ subsets, say $\mathcal{W}_1, \cdots, \mathcal{W}_{d+1} \subset \mathcal{U}$, such that each collection $\mathcal{W}_j$ consists of mutually disjoint sets. This is proven in Section 4.5 of \cite{Po12b}; see Proposition 4.7. 

For $ 1 \leq j \leq d+1$ let $\displaystyle W_j = \cup_{U_i \in \mathcal{W}_j} U_i$.  Consider a partition of unity $\vec{f} = \{f_1, \cdots, f_L\}$ subordinate to $\mathcal{U}.$  For $ 1 \leq j \leq d+1$ let $$F_j = \sum_{Supp(f_i) \subset W_j}{f_i}.$$ 

Write $G_k = F_1 + \cdots + F_k$, for $k = 1, \cdots, d+1.$  Applying the Poisson bracket inequality to $G_{k+1}$ and $G_k$ we get:
\begin{equation} \label{PB} |\zeta(G_{k+1}) - \zeta(G_k) - \zeta(F_{k+1})| \leq \sqrt{ 2S(G_k, F_{k+1}) \Vert \{G_{k}, F_{k+1} \} \Vert}.\end{equation}

We now analyze and simplify the above inequality.  

First, notice that \begin{equation} \label{lowerbound_nu} \Vert \{G_{k}, F_{k+1} \} \Vert_{\infty} \leq \nu_c(\vec{f}). \end{equation}

Second, we claim that Theorem \ref{theo_bound} implies that \begin{equation} \label{upperbound_S} S(G_k, F_{k+1}) \leq \pi r^2. \end{equation} Indeed, it follows directly from Theorem $\ref{theo_bound}$ that $c(sF_j) \leq \pi r^2$ for all $s >0$ and $1 \leq j \leq d+1$.  On the other hand, $-sF_j \leq 0$ and so $c(-sF_j) \leq 0$.  Hence, $c(sF_j) + c(-sF_j) \leq \pi r^2$ for all $s >0$ and $1 \leq j \leq d+1$.  

Third, the fact that $c(sF_j) \leq \pi r^2$ for all $s >0$ and $1 \leq j \leq d+1$ implies that \begin{equation} \label{zeta_F} \zeta(F_{k+1}) =0. \end{equation}

Using Equations \eqref{lowerbound_nu}, \eqref{upperbound_S}, and \eqref{zeta_F} we simplify Inequality \eqref{PB} and get
$$\zeta(G_{k+1}) \leq \zeta(G_k) + \sqrt{2 \pi r^2 \nu_c(\vec{f})}.$$
Thus, $1 = \zeta(G_{d+1}) \leq d \sqrt{2 \pi r^2 \nu_c(\vec{f})},$ and hence $\frac{1}{2 d^2 \pi r^2} \leq \nu_c(\vec{f}).$
This proves Theorem \ref{theo_1}.
\end{proof}

\bibliographystyle{abbrv}
\bibliography{biblio}

\end{document}